\newtheoremstyle{ptheorem}{1em}{0em}{\itshape}{}{\bfseries}{.}{.5em}{\thmname{#1}\thmnumber{ #2}\thmnote{ (\hspace{-.01pt}{#3})}}
\theoremstyle{ptheorem}
\newtheorem{thm}{Theorem}[section]
\newtheorem{lem}[thm]{Lemma}
\newtheoremstyle{hdef}{1em}{0em}{}{}{\bfseries}{.}{.5em}{\thmname{#1}\thmnumber{ #2}\thmnote{ (\hspace{-.01pt}{#3})}}
\theoremstyle{hdef}
\newtheorem{rem}[thm]{Remark}
\newtheoremstyle{premark}{1em}{0em}{
\addtolength{\@totalleftmargin}{1.5em}
\addtolength{\linewidth}{-1.5em}
\parshape 1 1.5em \linewidth}{}{\scshape}{.}{.5em}{}
\theoremstyle{premark}
\numberwithin{equation}{section}
\numberwithin{figure}{section}
\renewcommand{\phi}{\varphi}
\newcommand{\olb}[1]{%
  \vbox{\offinterlineskip\ialign{\hfil##\hfil\cr $\rotatebox[origin=c]{90}{$]$}$\cr\noalign{\kern-.45ex}{$#1$}\cr}}}
\begin{document}

\title{A Hyperbolic Analog of the Phasor Addition Formula\footnote{The author would like to acknowledge his gratitude to Santiago Codesido for suggesting the relation of this paper with critical phenomena, to Prof. Alberto Cabada for encouraging the author to write this paper and his remarkable suggestions, and to the anonymous referee for her or his valuable contributions which helped improve paper, specially the last section. The author was partially supported, while writing this article, by FEDER and Ministerio de Educaci\'on y Ciencia, Spain, project MTM2010-15314 and a FPU scholarship, Ministerio de Educaci\'on, Cultura y Deporte, Spain.}
}
\markright{Hyperbolic Phasor Addition Formula}

\author{F. Adri\'an F. Tojo}

\maketitle

\begin{abstract}
In this article we review the basics of the phasor formalism in a rigorous way, highlighting the physical motivation behind it and presenting a hyperbolic counterpart of the phasor addition formula.
\end{abstract}

\section{Introduction.}
The idea for this paper was born when the author was confronted with the need of simplifying linear combinations of hyperbolic sines and cosines with the same argument into a single trigonometric expression in order to solve for that argument (see \cite{Cab}). In the usual euclidean case, there are very well know formulae for the sum of linear combinations of sines and cosines. In particular, we have the phasor addition formula (equations \eqref{eqpaf}--\eqref{eqpafc} are some of its incarnations) which, somehow, is a generalization of the standard formula $\cos x+\sin x=\sqrt 2\sin(x+\pi/4)$. Nevertheless, similar formulae for the hyperbolic case seem to be absent from the literature.\par
It is interesting to note that something that seems so trivial as a mere algebraic manipulation has profound (and very well studied) roots in physics, where these linear combinations (in the euclidean case) occur naturally when studying \textbf{phasors}. This paper is written with the intention of introducing the reader to the usual phasor formalism used in physics and the motivation behind it, containing all the rigor expected by a mathematician. It will also generalize the formulae previously derived for the hyperbolic case with the hope they may eventually become handy for the reader.
\section{Phasors in physics.}
To be more precise, phasors appear in Physics from the need of establishing some kind of arithmetic for the set of functions
 $$\mathcal F:=\{f:\mathbb R\to\mathbb R\ :\ f(t)=a\cos(\omega t+\varphi),\ a\in\mathbb R,\ \varphi\in\mathbb R|_\sim\},$$
for some fixed $\omega\in\mathbb R\backslash\{0\}$ and where $\varphi_1\sim\varphi_2$ if and only if $ \varphi_1-\varphi_2\in 2\pi\mathbb Z$ for any $\varphi_1,\varphi_2\in\mathbb R$.
The parameters present in the functions of $\mathcal F$ are called, respectively, \textbf{amplitude} ($a$), \textbf{frequency} ($\omega$) and \textbf{phase} ($\varphi$). The functions in $\mathcal F$ occur mostly in problems related to Mechanics and Electronics (see, for instance, \cite{Des, Pha, Ser}), but their origin is rooted in arguably the most important problem in Physics: the \textit{harmonic oscillator}.\par
If we consider one space variable $x$ and a time variable $t$, the \textit{Euler-Lagrange equation} of motion (a fundamental principle of Dynamics) implies that the equation of motion of a free particle is given by
\begin{equation}\label{eqNew}
m\,x''(t)+V'(x(t))=0,
\end{equation}
where $m$ is the mass of the particle and $V:\mathbb R\to\mathbb R$ is a given potential. Equation \eqref{eqNew} is, basically, \textit{Newton's second Law of motion} for the potential $V$.\par
In many problems of Physics it is common to chose as potential a quadratic function of the kind $V(x)=\frac{1}{2}k\,x^2$ with $k>0$. This is the case, for instance, of \textit{Hook's Law} on the force of a spring, but this kind of potential also occurs in problems concerning pendula (when the angle of displacement is considered to be small),  RLC circuits, or acoustical systems. If fact, this potential appears naturally when taking a `first order' approximation for small perturbations on a mass in a stable equilibrium with respect to the forces it is subject to.\par
Hence, considering $V(x)=\frac{1}{2}k\,x^2$, and defining $\omega=\sqrt{k/m}$, we have that equation \eqref{eqNew} can be expressed as
\begin{equation}\label{eqho}x''(t)+\omega^2x(t)=0,\end{equation}
which is known as the equation of the \textbf{harmonic oscillator}.
The set of solutions of this equation is precisely
$$\{a\cos\omega t+b\cos\left(\omega t+\pi/2\right)\ :\ a,b\in\mathbb R\}$$
(observe that $-\cos\left(\omega t+\pi/2\right)=\sin\omega t$). Therefore, the need for adding functions in $\mathcal F$ appears in a natural way, because they are the solutions of one or more harmonic oscillators with the same constant $k$.\par
Now, the question that almost any mathematician would ask is, `what happens when $k<0$?' This situation has to do with the theory of critical phenomena \cite{Sta}. Briefly speaking, the potential has a critical point at $k=0$ and for $k<0$ the physical laws change qualitatively. This is the case of phase transitions in matter, for instance, the change from liquid to vapor or from being a normal conductor to being a superconductor.\par
In this new scenario, we can define $\omega=\sqrt{-k/m}$ and the equation derived from equation \eqref{eqNew} is
\begin{equation}\label{eqhho}x''(t)-\omega^2x(t)=0,\end{equation}
which has
$$\{a\cosh\omega t+b\sinh\omega t\ :\ a,b\in\mathbb R\}$$
as set of solutions. Now, can we develop a hyperbolic version of the phasor understanding of equation \eqref{eqhho}? Section 4 will answer this question and in Section 3 we establish the basics of the phasor formalism. Finally, Section 5 is a brief note on the possible extensions of the phasor addition formula and a new way of obtaining it.
\section{The phasor addition formula.}
Fix $\omega\in\mathbb R$. First of all, we will show that $\mathcal F$ is a group using some basic group algebra. Let
$$\mathcal F_\mathbb C:=\{f:\mathbb R\to\mathbb C\ :\ f(t)=ze^{i\omega t},\,z\in\mathbb C\}.$$
The functions in $\mathcal F$ are called \textbf{phasors}. Observe that the map $P:\mathcal C(\mathbb R,\mathbb C)\to\mathcal C(\mathbb R,\mathbb C)$ such that  $Pf(t)=f(t)/e^{i\omega t}$ is a group isomorphism with respect to the sum. We have that $\mathcal F$ is a subset of $\mathcal C(\mathbb R,\mathbb C)$ and $(\mathbb C,+)$, identified with the set of constant functions of $\mathcal C(\mathbb R,\mathbb C)$, is a subgroup of $\mathcal C(\mathbb R,\mathbb C)$. Furthermore, $P|_{\mathcal F_\mathbb C}:\mathcal F_{\mathbb C}\to\mathbb C$ is bijective. Hence, $(\mathcal F_\mathbb C,+)$ is a group. To see this it suffices to see that $x+y\in\mathcal F_\mathbb C$ for any given $x,y\in\mathcal F_\mathbb C$. $P(x),P(y)\in\mathbb C$ and, since $(\mathbb C,+)$ is a group, $P(x)+P(y)\in \mathbb C$. Thus, $P^{-1}(P(x)+P(y))=x+y\in\mathcal F_\mathbb C$.\par

On the other hand, consider the real part operator $\Re:\mathcal C(\mathbb R,\mathbb C)\to\mathcal C(\mathbb R,\mathbb R)$. $\Re$ is a surjective homomorphism and $\Re|_{\mathcal F_{\mathbb C}}:\mathcal F_\mathbb C\to\mathcal F$ is a surjective function. Thus, $\mathcal F$ is also a group. To see this, let $x,y\in\mathcal F$ and $x',y'\in\mathcal F_\mathbb C$ such that $\Re(x')=x,\Re(y')=y$. Hence, $x'+y'\in\mathcal F_\mathbb C$ and $\Re(x'+y')=x+y\in\mathcal F$. Due to these homomorphisms between the considered groups, to study the sum in $\mathcal F$, it is enough to study the sum in $\mathbb C$.\par
Let $a\,e^{i\varphi}$, $b\,e^{i\psi}\in\mathbb C\backslash\{0\}$. Then $a\,e^{i\varphi}+b\,e^{i\psi}=c\,e^{i\theta}$ for some $c\in\mathbb R^+$ and $\theta\in\mathbb R|_\sim$. Observe that\par
$$a\,e^{i\varphi}=a\cos\varphi+ia\sin\varphi,\quad b\,e^{i\psi}=b\cos\psi+ib\sin\psi,$$
so
$$a\,e^{i\varphi}+b\,e^{i\psi}=a\cos\varphi+b\cos\psi+i(a\sin\varphi+b\sin\psi).$$
Therefore, using the law of cosines,
\begin{align*}c & =|a\,e^{i\varphi}+b\,e^{i\psi}|=\sqrt{(a\cos\varphi+b\cos\psi)^2+(a\sin\varphi+b\sin\psi)^2}\\ & =\sqrt{a^2+b^2+2\,a\,b\cos(\varphi-\psi)}.\end{align*}
In order to get $\theta$, we consider the principal argument function $\operatorname{arg}$\footnote{The principal argument function is basically the $\texttt{atan2}$ function common to the math libraries of many computer languages such as FORTRAN \cite[p. 42]{Org}, C, Java, Python, Ruby or Pearl. The principal advantage of having two arguments instead of one, unlike in the traditional definition of the $\arctan$ function, is that it returns the appropriate quadrant of the angle, something that cannot be achieved with the $\arctan$. Some more basic information on the $\texttt{atan2}$ function and its usage can be found at \url{http://en.wikipedia.org/wiki/Atan2}.} such that, for every $z=x+iy\in\mathbb C$, $\operatorname{arg}(z)=\alpha$ where $\alpha$ is the only angle in $[-\pi,\pi)$ satisfying $\sin\alpha=y/\sqrt{x^2+y^2}$ and $\cos\alpha=x/\sqrt{x^2+y^2}$.

Therefore, $\theta=\operatorname{arg}(a\cos\varphi+b\cos\psi+i(a\sin\varphi+b\sin\psi))$. So we can conclude that
\begin{equation}\label{eqpaf}a\,e^{i\varphi}+b\,e^{i\psi}=\sqrt{a^2+b^2+2\,a\,b\cos(\varphi-\psi)}e^{i\operatorname{arg}(a\cos\varphi+b\cos\psi+i(a\sin\varphi+b\sin\psi))}.\end{equation}
Equation \eqref{eqpaf} is called the \textbf{phasor addition formula}.\par
If we want to write equation \eqref{eqpaf} in terms of the elements of $\mathcal F$, we just have to take the real part on both sides of the equation:

\begin{multline}\label{eqpaf2}a\cos(\omega t+\varphi)+b\cos(\omega t+\psi)= \sqrt{a^2+b^2+2\,a\,b\cos(\varphi-\psi)}\cdot\\\cos[\omega t+\operatorname{arg}(a\cos\varphi+b\cos\psi+i(a\sin\varphi+b\sin\psi))].\end{multline}
In particular,
\begin{equation}\begin{aligned}\label{eqpafc}& a\cos(\omega t)+b\sin(\omega t)=a\cos(\omega t)-b\cos(\omega t+\pi/2) \\ = & \sqrt{a^2+b^2}\cos[\omega t+\operatorname{arg}(a-ib)]=\sqrt{a^2+b^2}\sin[\omega t+\operatorname{arg}(b+ia)].\end{aligned}\end{equation}
From this last formula, we can recover the phasor addition formula just by observing the classical trigonometric identities $\sin(\alpha \pm \beta) = \sin \alpha \cos \beta \pm \cos \alpha \sin \beta$ and $\cos(\alpha \pm \beta) = \cos \alpha \cos \beta \mp \sin \alpha \sin \beta$.
\par
There is an straightforward geometrical representation of the phasor addition formula in the euclidean case as Figure 1 shows.
\begin{figure}[h!]\label{fig2}
\begin{center}
 \includegraphics[width=.5\textwidth]{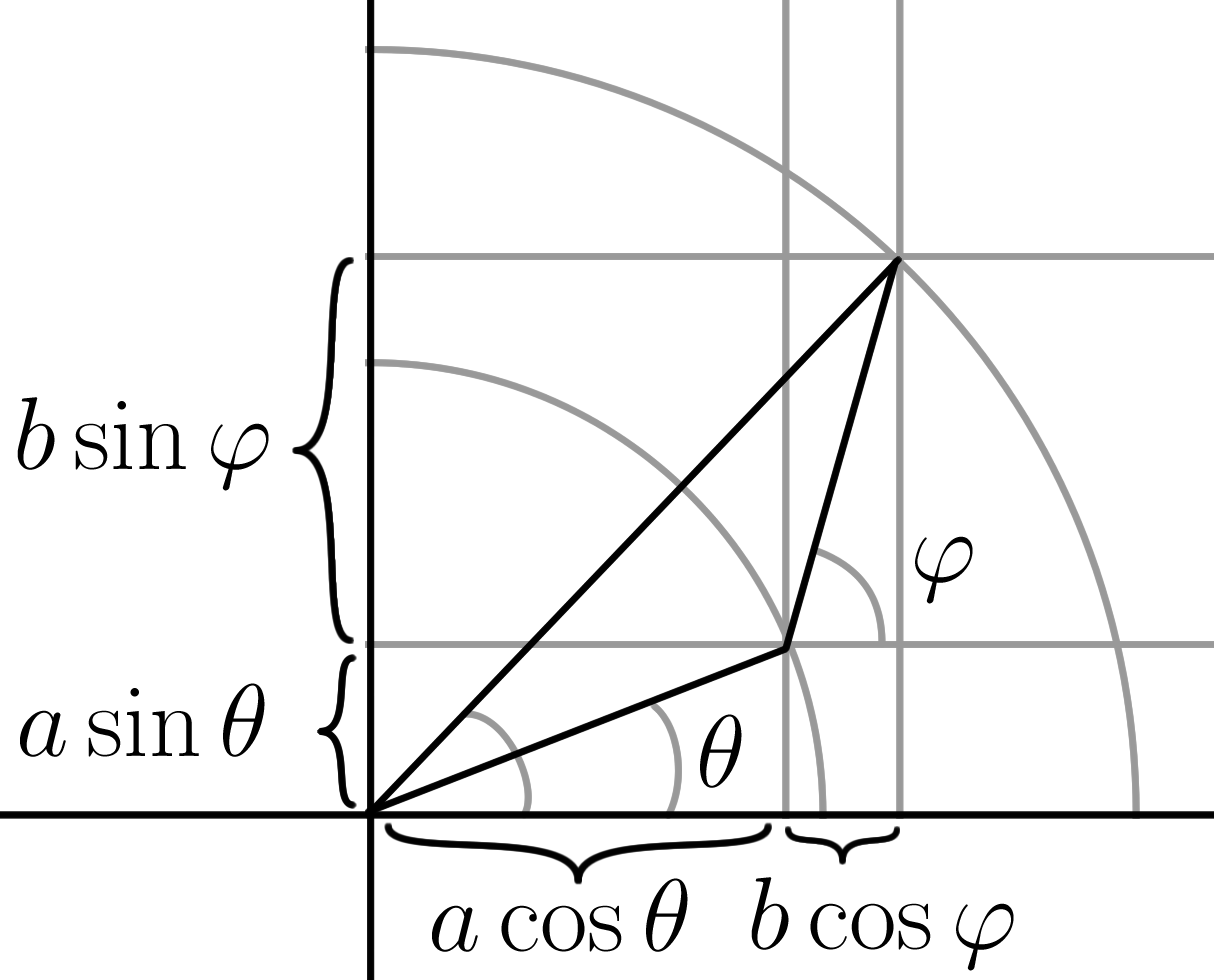}
 \end{center}
 \caption{Graphical representation of $a \cos  \theta +  b\cos \varphi$ and $a \sin  \theta +  b \sin\varphi$}
\end{figure}
The key to this graphical representation is that, on $\mathcal F_{\mathbb C}$, the sum is the sum of vectors on the plane. Then we just have to take the real part of this sum, that is, the projection onto the $OX$ axis, to obtain the desired result.
\section{The hyperbolic version of the phasor addition formula.}
We now obtain a hyperbolic counterpart of the phasor addition formula as expressed in equation \eqref{eqpafc}.\par
Let $$\mathcal G:=\{f:\mathbb R\to\mathbb R\ :\ f(t)=a\cosh\omega t+b\sinh\omega t;\ a,b\in\mathbb R\}.$$
It is straightforward to check that $(\mathcal G,+)$ is a group (and a $2$-dimensional real vector space).
Taking into account the identities
\begin{align*}
  \cosh (x + y) &= \sinh x \sinh y + \cosh x \cosh y, \\
  \sinh (x + y) &= \cosh x \sinh y + \sinh x \cosh y,
\end{align*}
it is clear that
\begin{align*} & a\cosh(\omega t+\varphi)+b\sinh(\omega t+\psi)\\ = & (a\cosh\varphi+b\sinh\psi)\cosh\omega t+(a\sinh\varphi+b\cosh\psi)\sinh\omega t\in\mathcal G.
\end{align*}
It is also clear that
\begin{align*} & a\cosh(\omega t+\varphi)+b\cosh(\omega t+\psi)\\ = & (a\cosh\varphi+b\cosh\psi)\cosh\omega t+(a\cosh\varphi+b\cosh\psi)\sinh\omega t\in\mathcal G,
\end{align*}
and
\begin{align*} & a\sinh(\omega t+\varphi)+b\sinh(\omega t+\psi)\\ = & (a\sinh\varphi+b\sinh\psi)\cosh\omega t+(a\sinh\varphi+b\sinh\psi)\sinh\omega t\in\mathcal G.
\end{align*}
\par
So we can reduce the general sums $a\cosh(\omega t+\varphi)+b\sinh(\omega t+\psi)$, $a\cosh(\omega t+\varphi)+b\cosh(\omega t+\psi)$ and $a\sinh(\omega t+\varphi)+b\sinh(\omega t+\psi)$ to the more simple case of $\alpha \cosh  \omega t +  \beta\sinh \omega t$.\par
Now we prove the following hyperbolic version of the phasor addition formula.
\begin{lem}\label{hyppha}
Let $ a$, $ b$, $t\in \mathbb R$. Then
\begin{equation}\label{hvpaf} a \cosh  \omega t +  b\sinh \omega t= \begin{cases} \sqrt{| a^2- b^2|} \cosh\left(\frac{1}{2}\ln\left|\frac{ a+ b}{ a- b}\right|+ \omega t\right) & \text{if}\quad  a>| b|,
\\ -\sqrt{| a^2- b^2|}  \cosh\left(\frac{1}{2}\ln\left|\frac{ a+ b}{ a- b}\right|+ \omega t\right) & \text{if}\quad-  a>| b|,
\\
\sqrt{| a^2- b^2|} \sinh\left(\frac{1}{2}\ln\left|\frac{ a+ b}{ a- b}\right|+ \omega t\right) & \text{if}\quad  b>| a|,
\\
-\sqrt{| a^2- b^2|} \sinh\left(\frac{1}{2}\ln\left|\frac{ a+ b}{ a- b}\right|+ \omega t\right) & \text{if }\quad - b>| a|,
\\
 a\,e^ {\omega t} & \text{if }\quad a= b,\\
 a\,e^{- \omega t} & \text{if }\quad a=- b.\end{cases}\end{equation}
\end{lem}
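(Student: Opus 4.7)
The plan is to use the hyperbolic addition identities stated just before the lemma as an ansatz, and to match coefficients. Specifically, for the non-degenerate cases I would try to write
\[
a\cosh\omega t+b\sinh\omega t = c\cosh(\omega t+\phi)\quad\text{or}\quad c\sinh(\omega t+\phi),
\]
and then expand using the formulas $\cosh(\omega t+\phi)=\cosh\omega t\cosh\phi+\sinh\omega t\sinh\phi$ and $\sinh(\omega t+\phi)=\sinh\omega t\cosh\phi+\cosh\omega t\sinh\phi$. Matching coefficients of $\cosh\omega t$ and $\sinh\omega t$ yields, in the first ansatz, $c\cosh\phi=a$ and $c\sinh\phi=b$; in the second, $c\sinh\phi=a$ and $c\cosh\phi=b$. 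Using $\cosh^2-\sinh^2=1$ gives $c^2=a^2-b^2$ in the cosh ansatz and $c^2=b^2-a^2$ in the sinh ansatz, which already explains why the four non-degenerate cases split according to whether $|a|>|b|$ or $|b|>|a|$.

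Next I would recover $\phi$. From $c\cosh\phi=a$, $c\sinh\phi=b$ we get $\tanh\phi=b/a$, and analogously $\tanh\phi=a/b$ in the sinh case. Applying the elementary identity $\arctanh(x)=\tfrac{1}{2}\ln\tfrac{1+x}{1-x}$ for $|x|<1$, the phase becomes $\phi=\tfrac12\ln\tfrac{a+b}{a-b}$ (resp.\ $\tfrac12\ln\tfrac{b+a}{b-a}$). The fact that $|\tanh\phi|<1$ is precisely the hypothesis of each case, so the argument of the logarithm is strictly positive and can safely be written as $\bigl|\tfrac{a+b}{a-b}\bigr|$; this absolute-value reformulation is what makes a single compact expression work for all four sign cases.

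The sign of $c$ then has to be chosen so that the equations $c\cosh\phi=a$ and $c\cosh\phi=b$ are satisfied, since $\cosh\phi>0$ always. Thus $c=+\sqrt{|a^2-b^2|}$ when $a>|b|$ or $b>|a|$, but $c=-\sqrt{|a^2-b^2|}$ when $-a>|b|$ or $-b>|a|$; this produces the four leading signs in the lemma. The main delicate point, and the one I would write out most carefully, is checking that the symmetric expression $\tfrac12\ln\bigl|\tfrac{a+b}{a-b}\bigr|$ is indeed the correct phase in all four sign configurations (in particular in cases 2 and 4 where $a$ or $b$ is negative, so that $a+b$ and $a-b$ can be individually negative but their quotient remains positive).

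Finally, the degenerate cases $a=\pm b$ correspond precisely to $c=0$ in the above ansätze, which is why they must be treated separately. Here a direct computation using $\cosh x\pm\sinh x=e^{\pm x}$ immediately gives $a\cosh\omega t+a\sinh\omega t=a e^{\omega t}$ and $a\cosh\omega t-a\sinh\omega t=a e^{-\omega t}$, matching the last two lines of \eqref{hvpaf}. No real obstacle is expected beyond careful sign bookkeeping.
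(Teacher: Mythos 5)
Your proposal is correct, but it follows a genuinely different route from the paper's proof. The paper proves the case $a>|b|$ by brute-force exponential algebra: it sets $c=e^{\omega t}$, writes the left-hand side as $\frac{a+b}{2}c+\frac{a-b}{2}c^{-1}$, factors out $\frac{\sqrt{a^2-b^2}}{2}$ and recognizes $\sqrt{\frac{a+b}{a-b}}=e^{\frac12\ln\frac{a+b}{a-b}}$, so the phase drops out without ever invoking the addition theorems or $\operatorname{arctanh}$; the remaining cases are declared analogous. You instead posit the ansatz $c\cosh(\omega t+\phi)$ or $c\sinh(\omega t+\phi)$, match coefficients of $\cosh\omega t$ and $\sinh\omega t$, and solve the resulting system via $c^2=\pm(a^2-b^2)$, $\tanh\phi=b/a$ (resp.\ $a/b$) and $\operatorname{arctanh}x=\frac12\ln\frac{1+x}{1-x}$. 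This buys a structural explanation of why the six cases split the way they do (sign of the Minkowski quantity $a^2-b^2$ governs the cosh/sinh choice, the sign of $a$ or $b$ governs the leading sign, and the lightlike cases $a=\pm b$ are exactly the degeneration $c=0$), and it is in fact close in spirit to the paper's later hyperbolic-number derivation in Section 5, where the same $\tanh\theta$ computation appears. The point you flag as delicate--that $\frac12\ln\bigl|\frac{a+b}{a-b}\bigr|$ is the correct phase in all four sign configurations--does check out: in each case $|b/a|<1$ (or $|a/b|<1$), so $\frac{1+b/a}{1-b/a}=\frac{a+b}{a-b}$ is positive and coincides with its absolute value (and in the sinh cases $\frac{b+a}{b-a}=\bigl|\frac{a+b}{a-b}\bigr|$), so writing this verification out completes the proof; your treatment of the degenerate cases via $\cosh x\pm\sinh x=e^{\pm x}$ is exactly right. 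One cosmetic slip: in your sign discussion the pair of equations should read $c\cosh\phi=a$ in the cosh ansatz and $c\cosh\phi=b$ in the sinh ansatz (you wrote $c\cosh\phi=a$ and $c\cosh\phi=b$ as if simultaneous), but the intended meaning is clear and the conclusion about the sign of $c$ is correct.
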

\begin{proof} For convenience, let $c=e^{\omega t}$. We prove the case $a>|b|$. The rest of the cases are proved in an analogous fashion.\par
Observe that, if $a>|b|$, then $a+b,a-b>0$. Thus,
\begin{align*}
 & a \cosh  \omega t +  b\sinh \omega t \\ = & \frac{a}{2}(c+c^{-1})+\frac{b}{2}(c-c^{-1})=\frac{a+b}{2}c+\frac{a-b}{2}c^{-1}\\= &\frac{\sqrt{a^2-b^2}}{2}\left(\sqrt{\frac{a+b}{a-b}}c+\sqrt{\frac{a-b}{a+b}}c^{-1}\right)=\frac{\sqrt{a^2-b^2}}{2}\left(e^{\ln\sqrt{\frac{a+b}{a-b}}}c+e^{-\ln\sqrt{\frac{a+b}{a-b}}}c^{-1}\right)\\
= & \frac{\sqrt{a^2-b^2}}{2}\left(e^{\frac{1}{2}\ln\frac{a+b}{a-b}}c+e^{-\frac{1}{2}\ln\frac{a+b}{a-b}}c^{-1}\right)\\= & \frac{\sqrt{a^2-b^2}}{2}\left(e^{\frac{1}{2}\ln\frac{a+b}{a-b}+\omega t}+e^{-\left(\frac{1}{2}\ln\frac{a+b}{a-b}+\omega t\right)}\right)=
\sqrt{a^2-b^2}\cosh\left(\frac{1}{2}\ln\frac{a+b}{a-b}+\omega t\right).
\end{align*}
\end{proof}
\begin{rem} One of the crucial differences between the hyperbolic and euclidean cases is that in the hyperbolic case there is not periodicity\footnote{Not, at least, when we consider those functions as defined on the real numbers. Hyperbolic functions are periodic when defined on the complex plane.}, what is more, we cannot relate the hyperbolic sine and cosine by a phase displacement, which implies that we may or may not be able to express an element of $\mathcal G$ in the form of a hyperbolic cosine depending on the values of $a$ and $b$, as Lemma \ref{hyppha} shows.\par
Also, comparing it with formula \eqref{eqpafc}, we observe two common elements. First, the argument of the function (euclidean or hyperbolic) involved is $\omega t$ plus a displacement depending on the parameters $a$ and $b$. The second similitude is that, multiplying such function, there is a metric applied to the vector $(a,b)$. In the euclidean case case, it is just the euclidean norm $\|(a,b)\|=\sqrt{a^2+b^2}$, that is, the square root of the metric $\mu(a,b)=a^2+b^2$ on $\mathbb R^2$. In the hyperbolic case, however, we have what is called the \textbf{Minkowski norm} $\|(a,b)\|_M=\sqrt{|\nu(a,b)|}$ where $\nu(a,b)=a^2-b^2$ is the \textbf{Minkowski metric} on $\mathbb R^2$ of signature $(1,-1)$. The Minkowski norm is not a norm in the usual sense (it is not subadditive), but it provides a useful generalization of the concept of `length' in the Minkowski plane\footnote{For more information on this topic, the book \cite{Cat} has a whole chapter on the trigonometry of the Minkowski plane.}.
\par The vectors $w=(a,b)$ are called \textbf{timelike} when $\nu(w,w)<0$, \textbf{spacelike} when $\nu(w,w)>0$ and \textbf{null}, or \textbf{lightlike} when $\nu(w,w)=0$. Observe that the two first cases of equation \eqref{hvpaf} are for spacelike vectors, the two following ones for timelike vectors, and the two last ones for lightlike vectors.
\end{rem}
It is also possible to give a geometrical representation of linear combinations of hyperbolic sines and cosines but, due to the euclidean nature of the plane, it is not as straightforward as in the euclidean case. In Figure 2 we illustrate how $a \cosh  u +  b\sinh u$ can be computed graphically.\par
\begin{figure}[h!]\label{fig1}
\begin{center}
 \includegraphics[width=.7\textwidth]{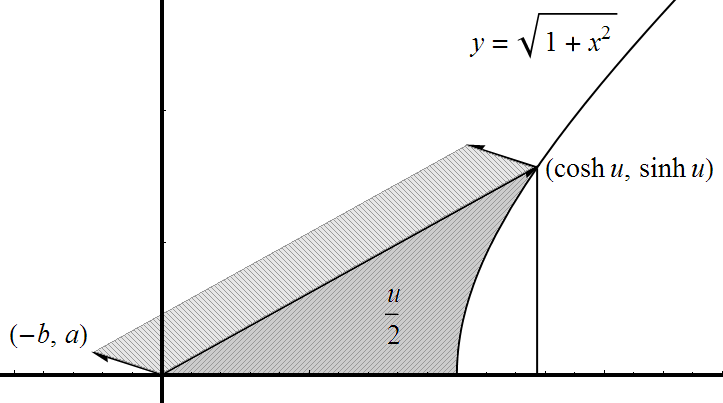}
 \end{center}
 \caption{Graphical representation of $a \cosh  u +  b\sinh u$}
\end{figure}
Consider $a,b,u>0$. The graph of the hyperbola $y^2-x^2=1$ satisfies that its points are of the form $(\cosh u,\sinh u)$. Furthermore, the area between the vector $(\cosh u,\sinh u)$, the hyperbola and the $OX$ axis is half the \textbf{hyperbolic angle} $u$. Now, if we draw the vector $(-b,a)$ and consider the parallelogram formed by the vectors $(\cosh u,\sinh u)$ and $(-b,a)$, the area of this parallelogram is precisely $a \cosh  u +  b\sinh u$. The reason for this is given by the cross product formula for the area of the parallelogram and the fact that $u>0$:
\begin{align*} & |(\cosh u,\sinh u,0)\times(-b,a,0)|=|(0,0,a\cosh u+b\sinh u)|\\ = & |a\cosh u+b\sinh u|=a\cosh u+b\sinh u.\end{align*}

\section{A final note: extending the formula}
If there is anything powerful behind the concept of exponential, hyperbolic sine, hyperbolic cosine, and other trigonometric functions, it is their wide range of definition. By this, we mean that they are defined in any Banach algebra with unity\footnote{A Banach algebra $\mathcal{A}$ is just an algebra endowed with a norm $\|\cdot\|$ that makes it a Banach space such that $\|xy\|\le\|x\|\|y\|$ for every $x,y\in\mathcal{A}$.}. Let $\mathcal{A}$ be a Banach algebra and $x\in\mathcal{A}$. We define, as usual,
\begin{align*}e^x:= & \sum_{k=0}^\infty\frac{ x^k}{k!},\\
\cosh x:= & \frac{e^x+e^{-x}}{2}=\sum_{k=0}^\infty \frac{ x^{2k}}{(2k)!},\\
\sinh x:= & \frac{e^x-e^{-x}}{2}=\sum_{k=0}^\infty \frac{ x^{2k+1}}{(2k+1)!}.\\
\end{align*}
Clearly, $\cosh$ is just the even part of the exponential and $\sinh$ its odd part, so $e^x=\cosh x+\sinh x$. If we go back to the proof of Lemma \ref{hyppha}, we observe that it relies only on these kind of definitions, so it is valid for every $a,b\in\mathbb{R}$ and any $\gamma=\omega t$ in a real Banach algebra with unity $\mathcal{A}$, in particular for $\gamma\in\mathbb C$. This is consistent with the euclidean phasor addition formula as we show next. Let $a,b,x\in\mathbb R$, assume, for instance, $a>|b|$ and consider $a\cosh ix+b\sinh ix$. Then, using Lemma \ref{hyppha},
\begin{align*}
& a \cosh  ix +  b\sinh ix=\sqrt{ a^2- b^2} \cosh\left(\frac{1}{2}\ln\frac{ a+ b}{ a- b}+ ix\right)\\
= & \frac{\sqrt{ a^2- b^2}}{2}\left(\sqrt{\frac{ a+ b}{ a- b}}e^{ix}+ \sqrt{\frac{ a- b}{ a+ b}}e^{-ix}\right) \\ = & \frac{1}{2}\left[( a+ b)(\cos x+i\sin x)+( a- b)(\cos x-i\sin x)\right]
=  a\cos x+ib\sin x
\end{align*}
which is expected from the known fact that $\cosh  ix=\cos x$, $\sinh ix=i\sin x$.\par
This observation relating the generality of the definitions of the trigonometric functions suggests yet another question. Is there a way to derive the hyperbolic phasor addition formula in the same way we derived it for the euclidean case? Or, to be more precise, is there a Banach algebra which would fulfill the role $\mathbb C$ played in the euclidean case? The answer is yes.
Remember the traditional definition of the complex numbers:
$$\mathbb C=\{x+iy\ :\ x,y\in\mathbb R,\ i\not\in\mathbb R,\ i^2=-1\}.$$
In the same way, we can define the hyperbolic numbers\footnote{See \cite{Ant,Cat} for an extended description on hyperbolic number arithmetic, calculus and geometry. It is also interesting to point out that hyperbolic numbers are a natural setting for the Theory of Relativity.}:
$$\mathbb D=\{x+jy\ :\ x,y\in\mathbb R,\ j\not\in\mathbb R,\ j^2=1\}.$$
As in the case of the complex numbers, the arithmetic in $\mathbb D$ is the natural extension assuming the distributive, associative, and commutative properties for the sum and product. Several definitions appear in a natural way, parallel to the case of $\mathbb C$. Let $w\in\mathbb D$, with $w=x+jy$. Hence
$$\overline{w}: =x-jy,\quad \Re(w):=x,\quad \Im(w):= y,$$
and since $w\overline w=x^2-y^2\in\mathbb R$, we can define
$$|w|:=\sqrt{|w\overline w|},$$
which is \emph{precisely} the Minkowski norm. It follows that $|w_1w_2|=|w_1||w_2|$ for every $w_1,w_2\in\mathbb D$ and, if $|w|\ne0$, then $w^{-1}=\overline w/|w|^2$. If we define
$$\|w\|=\sqrt{2(x^2+y^2)},$$
we have that $\|\cdot\|$ is a norm and $(\mathbb D, \|\cdot\|)$ is a Banach algebra, so the exponential and the hyperbolic trigonometric functions are well defined. Also, it is clear from the definitions that
$$e^{jw}=\cosh w+j\sinh w$$
and $|e^{jx}|=1$ for $x\in\mathbb R$.\par
The only important difference with respect to $\mathbb C$ is that $\mathbb D$ is not a division algebra (not every non-zero element has an inverse).\par
Now, let $a,b\in\mathbb R$ and $\gamma=\gamma_1+j\gamma_2\in\mathbb D$ with $\gamma_1,\gamma_2\in\mathbb R$. Observe that
$$\Re([a+jb]e^{j\gamma})=a\cosh \gamma+b\sinh \gamma.$$
We try, as we do with complex numbers, to rewrite $(a+jb)e^{j\gamma}$ as $r\,e^{j\theta}$, where $r\in\mathbb [0,+\infty)$ and $\theta\in\mathbb R$. Assume $|a+jb|\ne 0$. Then
$$r=|(a+jb)e^{j\gamma}|=|a+jb|e^{\gamma_2}$$
and
\begin{align*}(a+jb)e^{j\gamma}= & e^{\gamma_2}[a\cosh \gamma_1+b\sinh \gamma_1+j(a\sinh \gamma_1+b\cosh \gamma_1)]\\=& |a+jb|e^{\gamma_2}\cosh \theta+j|a+jb|e^{\gamma_2}\sinh \theta=r\,e^{j\theta}.
\end{align*}
Therefore,
$$a\cosh \gamma_1+b\sinh \gamma_1=|a+jb|\cosh \theta\quad\text{and}\quad b\cosh \gamma_1+a\sinh \gamma_1=|a+jb|\sinh \theta.$$
That is, assuming $a>|b|$ and defining $\sigma=\operatorname{arctanh}(b/a)$,
$$\tanh \theta=\frac{b\cosh \gamma_1+a\sinh \gamma_1}{a\cosh \gamma_1+b\sinh \gamma_1}=\frac{\frac{b}{a}+\tanh \gamma_1}{1+\frac{b}{a}\tanh \gamma_1}=\tanh(\sigma+\gamma_1)$$
so
\begin{align*}\theta & =\operatorname{arctanh}\frac{b}{a}+\gamma_1=\frac{1}{2}\ln \frac{ 1+\frac{b}{a}}{ 1-\frac{b}{a}}+\gamma_1=\frac{1}{2}\ln\frac{ a+b}{ a-b}+\gamma_1.\end{align*}
Hence,
\begin{align*}a\cosh \gamma+b\sinh \gamma= & |a+jb|e^{\gamma_2}\Re \left(e^{j\left(\frac{1}{2}\ln\frac{a+b}{a-b}+\gamma_1\right)}\right)\\= & |a+jb|e^{\gamma_2}\cosh\left(\frac{1}{2}\ln\frac{a+b}{a-b}+\gamma_1\right).\end{align*}
For $\gamma\in\mathbb R$, we recover the first case of Lemma \ref{hyppha}.

\end{document}